\documentclass[a4paper,12pt]{article}
\usepackage{preamble}

\bibliography{references}

\title{The Constant Domain Axiom in Toposes}
\date{\today}
\author{Jérémie Marquès}

\begin{document}

\maketitle

\begin{abstract}
	Constant domain intuitionistic logic admits a complete semantics in presheaf toposes, by interpreting sorts as constant presheaves and predicates as arbitrary sub-presheaves. The goal of this note is to point out how this fits in topos theory, replacing constant presheaves with objects that are covert and Hausdorff when considered as discrete locales. We call these objects ``CD'' and we show that they form a Boolean pretopos in any topos.
\end{abstract}

\section{Constant domain logic}

Constant domain logic is obtained by adding the \emph{constant domain principle} to intuitionistic logic:
\begin{equation}\label{eq:cofrob}\tag{CD}
	∀a:[φ(a)∨ψ] ⇒ [∀a:φ(a)] ∨ ψ
\end{equation}
It is dual to the \emph{Frobenius law} which, on the other hand, is part of standard intuitionistic logic:
\begin{equation}\label{eq:frob}\tag{Frob}
	[∃a:φ(a)] ∧ ψ ⇒ ∃a:[φ(a)∧ψ]
\end{equation}
Hence, \eqref{eq:cofrob} makes intuitionistic logic a bit more symmetrical. This is reflected in the standard presheaf semantics of constant domain logic, where sorts are interpreted as constant presheaves. A universal quantification is then interpreted as a quantification over the individuals at the \emph{current stage}, just like for existential quantification. On the other hand, implication still requires us to look at the future stages.

The reader can find more background on constant domain logic in the introduction of \cite{MinOlkUrq2013}, where it is shown that it fails to satisfy the Craig interpolation property.

At first, the constant domain modification appears a bit artificial in categorical logic. The most striking issue is that we cannot force \eqref{eq:cofrob} to hold ``uniformly'' when quantification ranges over the fibers of any map, or else implication would also be impacted and we would end up with Boolean logic. In particular, if \eqref{eq:cofrob} holds uniformly in a topos, then it is Boolean. We need a distinguished class of objects that can be used as the domain of quantification in \eqref{eq:cofrob}.

Nonetheless, constant domain logic is connected to standard concepts in locale theory. Some background can be found in \cite[C]{Johnstone2002v2}. In the classical semantics of intuitionistic logic in a topos, sorts and formulas are interpreted as objects and subobjects. We can place ourselves in a more general setting where sorts are interpreted as locales and formulas as open sublocales. The Frobenius law \eqref{eq:frob} defines \emph{open maps} of locales. It is therefore natural to interpret every map as an open map. Equivalently, every sort is interpreted as an object $X$ such that $X→1$ and $X→X×X$ are open, which means that $X$ is discrete. Symmetrically, the constant domain law \eqref{eq:cofrob} defines \emph{closed maps} of locales. Thus every map over which \eqref{eq:cofrob} applies should be interpreted as a closed map of locales. Similarly to the situation with open maps, it is equivalent to requiring that $X→1$ and $X→X×X$ are closed for every sort $X$, as shown in Proposition~\ref{prop:stab-CD} below.

\section{Constant domain objects}

We start by specializing some localic terminology to discrete spaces. A morphism $f : X→Y$ in a topos is \emph{closed} if \eqref{eq:cofrob} holds when the quantifier is understood to range over an arbitrary fiber of $f$. Equivalently, this means that $f$ is closed as a map of discrete locales. An object $X$ is \emph{covert} if the map $X→1$ is closed or if, equivalently, $X$-indexed infima distribute over finite suprema in the subobject classifier. We say that $X$ is \emph{Hausdorff} if the map $X→X×X$ is closed.

Information about closed maps can be found in \cite[C3.2]{Johnstone2002v2}. It is pointed out there that properness (relative compactness) is better behaved as a dual of openness. This seems unfortunately too strong to offer a complete semantics for constant domain logic, because a theory could force the existence of infinitely many distinct constants.

Note that a discrete $X$ is Hausdorff if and only if it is decidable in the sense that equality on $X$ is complemented. More generally, a monomorphism $X↪Y$ is closed if and only if $X$ is a complemented subobject of $Y$: the complement of $X↪Y$ is an open sublocale, hence a subobject of $Y$. This gives a topological explanation of the fact that equality is decidable in constant domain logic. We will say ``decidable'' instead of ``Hausdorff.''

A \emph{CD object} in a topos is an object that is covert and decidable (``CD'' could also stand for ``constant domain'').

\begin{prop}{}{stab-CD}
	The CD objects in a topos form a Boolean pretopos closed under taking complemented subobjects. Moreover, any map between CD objects is closed.
\end{prop}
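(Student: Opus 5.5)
Since CD objects are defined by two conditions on maps (covertness of $X→1$, closedness of $X→X×X$), I will argue directly with closed maps of discrete objects. The facts I need: closed maps are stable under composition, and closed maps are stable under pullback along arbitrary maps. Pullback stability is the fibrewise reading of \eqref{eq:cofrob}: the fibre of a pullback of $f$ over a point is isomorphic to the corresponding fibre of $f$, so the same quantifier law holds there. Composition stability is the statement that universal quantification along $g∘f$ is $∀_g∀_f$, together with the internal form of \eqref{eq:cofrob} for each map. I also record that a mono is closed iff it is complemented, as noted in the text.

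\emph{Any map between CD objects is closed.} Factor $f:X→Y$ as the graph $X→X×Y$ followed by the projection $X×Y→Y$. The projection is the pullback of $X→1$ along $Y→1$, so it is closed because $X$ is covert. The graph is the pullback of the diagonal $Y→Y×Y$ along $f×\mathrm{id}_Y$, so it is closed because $Y$ is decidable. Composing the two gives that $f$ is closed.

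\emph{Closure properties.} These all follow from the stability facts.
\begin{itemize}
	\item The terminal object is CD.
	\item If $X$ and $Y$ are CD, then $X×Y→1$ factors as $X×Y→Y→1$, where the first map is a pullback of $X→1$. The diagonal of $X×Y$ is a product of the two diagonals, hence a composite of pullbacks of them. So $X×Y$ is CD.
	\item Let $A↪X$ be a complemented subobject of a CD object. It is decidable, because equality on it is the restriction of equality on $X$. It is covert, because $A→X$ is a closed mono and $X→1$ is closed.
	\item Pullbacks (equalizers) of CD objects are complemented subobjects of products, since equality on the codomain is decidable. Hence CD objects have finite limits computed as in the topos.
	\item The coproduct $X+Y$ is decidable. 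Covertness follows from $∀_{X+Y}φ = ∀_Xφ∧∀_Yφ$ and distributivity of $∧$ over $∨$.
	\item The initial object is trivially CD.
\end{itemize}
Coproducts in the topos are disjoint and universal, so these properties pass to the full subcategory.

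\emph{Quotients, exactness and Booleanness.} This is the step I expect to be the main obstacle.
\begin{itemize}
	\item \emph{Images.} If $X→Y$ is an epi with $X$ CD and $Y$ decidable, then $Y$ is covert, because $∀_Yφ = ∀_X(φ∘e)$ when $e$ is epi.
	\item \emph{Images inside the subcategory.} The image of a map between CD objects is a closed subobject of the codomain, since it is the image of a closed map. It is therefore complemented, hence CD, so images are taken inside the subcategory.
	\item \emph{Quotients by equivalence relations.} For an equivalence relation $R↪X×X$ on a CD object, $R$ is CD, hence complemented. I then need $X/R$ to be decidable. Two classes are equal iff $∃x,x'.\,(x\sim a)∧(x'\sim b)∧R(x,x')$, which reduces to $R(a,b)$ on representatives; this is decidable because $R$ is complemented. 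Local surjectivity of $X→X/R$ transports the decision. Covertness then follows from the image argument above.
\end{itemize}
Effectiveness and regularity are inherited from the topos because all these constructions agree with the topos ones. Finally, Booleanness holds because every subobject of a CD object $X$ that is itself CD has a closed inclusion, hence is complemented, and its complement is again CD. So the subcategory is a Boolean pretopos.
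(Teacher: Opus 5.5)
Your proof is correct and follows essentially the same route as the paper: factoring $f$ through its graph $X\to X\times Y\to Y$ is the categorical form of the paper's direct computation $y\in\forall_f(\varphi)\Leftrightarrow\forall x\in X\colon (f(x)\neq y)\lor\varphi(x)$, and the closure properties are obtained in the same way, except that your coproduct argument via $\forall_{X+Y}\varphi=\forall_X\varphi\land\forall_Y\varphi$ replaces the paper's localic image argument. You also make explicit two points the paper leaves implicit: every equivalence relation in the subcategory is automatically decidable, since $R\hookrightarrow X\times X$ is a map between CD objects and hence a closed, therefore complemented, mono; and Booleanness follows because subobjects in the subcategory are exactly the complemented subobjects.
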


\begin{proof*}{}
	We start by showing that any map between CD objects is closed. Let $f : X→Y$ be such a map and let $φ ⊆ X$. Since $Y$ is decidable,
	\[
		y ∈ ∀_f(φ) ⇔ ∀x∈X : (f(x)=y) → φ(x) ⇔ ∀x∈X : (f(x)≠y) ∨ φ(x) \text{.}
	\]
	Let $ψ$ be a proposition. Using that $X$ is covert,
	\[ ∀x∈X : [(f(x)≠y)∨φ(x)∨ψ] ⇔ [∀x∈X : (f(x)≠y)∨φ(x)]∨ψ \]
	and this shows that $f$ is closed.
	
	We show that CD objects are stable under taking complemented subobjects. Let $X$ be CD and let $φ ⊆ X$ be complemented. Then the composite $φ → X → 1$ is closed as a composite of closed maps. Moreover, $φ$ is decidable as a subobject of a decidable object.
	
	Next, we show that CD objects are stable under finite limits. Let $X$ and $Y$ be CD objects. We will use that the product of a closed map by a discrete locale is closed \cite[Lem.~C3.2.3]{Johnstone2002v2}. The product $X×Y$ is decidable since its diagonal is the composite of closed maps
	\[ X×Y → X^2×Y → X^2×Y^2 \text{.} \]
	Alternatively, the conjunction of the two complemented equalities is complemented. To see that $X×Y$ is covert, we compose the closed maps $X×Y→X→1$. Note also that the terminal object $1$ is CD.
	
	Let $f, g : X⇉Y$ be a pair of maps between CD objects. Since $Y$ is decidable, the equalizer $\eq(f,g) ⊆ X$ is complemented, hence CD. This concludes the proof that CD objects are stable under finite limits.
	
	The disjoint sum of two CD objects $X$ and $Y$ is again CD: Decidability of $X+Y$ is easy to see. It is covert because if $C_1+C_2 ⊆ X+Y$ is a closed sublocale, then its image in $1$ is the union of the images of $C_1$ and $C_2$ which is closed. The empty object is also CD.
	
	If $f : X→Y$ is a map between CD objects, then $f$ is closed and in particular $f[X] ⊆ Y$ is closed, which means that it is complemented and thus also CD. This shows that the CD objects form a coherent category.
	
	Finally, we show that CD objects are closed under quotients by decidable equivalence relations. Let $R ⊆ X^2$ be a decidable equivalence relation on a CD object. Then $X/R$ is decidable because $R$ is. It is also covert because the composite $X↠X/R→1$ is closed and $X↠X/R$ is surjective.
\end{proof*}

Constant domain logic can be interpreted in any topos as long as the sorts are interpreted as CD objects. The following proposition, which can be obtained from \cite[Lem.~C3.2.4]{Johnstone2002v2}, shows that we recover the usual constant domain semantics in presheaf toposes. In general, the relevance of this notion is unclear to me.

\begin{prop}{}{}
	A presheaf $F : \cC → \cSet$ is covert in $[\cC,\cSet]$ if and only if it is valued in surjective maps. It is CD if and only if it is valued in bijective maps.
\end{prop}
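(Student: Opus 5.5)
The plan is to work with the Kripke–Joyal semantics of the covariant presheaf topos $[\cC,\cSet]$. Here a stage is an object $c$, and its future stages are the arrows $f : c→d$. Disjunction is forced locally: $c ⊩ A∨B$ iff $c⊩A$ or $c⊩B$, since the coverage is trivial. Universal quantification over $F$ is forced at $c$ iff for every $f : c→d$ and every $x∈F(d)$ we have $d ⊩ χ(x)$. Covertness of $F$ means that (CD) holds at every stage for formulas $φ(x)$ and $ψ$ with parameters from that stage. The proposition has two parts, covertness and decidability, and the CD statement then follows by combining them: bijective means surjective and injective.

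\emph{Surjective implies covert.} Suppose every $F(f)$ is surjective and $c ⊩ ∀x:(φ(x)∨ψ)$, but $c\not⊩ψ$. Given $f : c→d$ and $x ∈ F(d)$, choose $x'∈F(c)$ with $F(f)(x') = x$. Then $c ⊩ φ(x')∨ψ$, and since $ψ$ fails at $c$ we get $c⊩φ(x')$. Monotonicity along $f$ gives $d ⊩ φ(x)$. Hence $c⊩∀x:φ(x)$, which is (CD) at $c$.

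\emph{Covert implies surjective.} This is the direction I expect to be the main obstacle. The difficulty is finding a subobject $φ$ of $F$ over the representable at $c$, together with a sieve $ψ$ on $c$, that witnesses the failure of (CD) when some $F(f) : F(c)→F(d)$ misses an element $x$. The natural first attempt takes $φ$ to be the subobject consisting of the pairs $(y, g : c→e)$ such that $y$ lies in the image of $F(g)$. This makes $∀x:φ(x)$ fail at $c$, witnessed by $f$ and $x$. It is hard, however, to choose $ψ$ as a genuine upward-closed cosieve with $ψ$ false at $c$ yet $φ∨ψ$ true everywhere: candidate sieves such as ``$F(g)$ is not surjective'' are not upward closed. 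Rather than fight this combinatorics directly, I would follow the remark before the statement and deduce this direction from \cite[Lem.~C3.2.4]{Johnstone2002v2}. That lemma characterises closed maps into $1$ in presheaf toposes, and I would translate its condition for the discrete locale $F$ into the statement that every transition map $F(f)$ is surjective. If that translation turns out to be delicate, the fallback is to extract from the proof of that lemma the explicit sieve it uses and plug it in as $ψ$ above.

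\emph{Decidability.} $F$ is decidable iff the diagonal subpresheaf $Δ ⊆ F×F$ is complemented. In a presheaf topos the pseudo-complement of $Δ$ is the subpresheaf of pairs $(y,z)∈F(c)^2$ such that $F(g)(y)≠F(g)(z)$ for all $g : c→e$. Thus $Δ ∨ ¬Δ = F×F$ holds exactly when any two distinct elements stay distinct under every transition map, i.e.\ when every $F(f)$ is injective. Combining this with the covertness characterisation, $F$ is CD iff every $F(f)$ is both surjective and injective, i.e.\ bijective.
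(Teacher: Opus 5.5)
There is a genuine gap, and it cannot be closed as stated. Your arguments for ``surjective $\Rightarrow$ covert'' and ``decidable $\Leftrightarrow$ injective'' are correct. The converse ``covert $\Rightarrow$ surjective'' is not proved: you defer it to Lemma~C3.2.4 of the Elephant, plus a fallback plan, and the paper also does nothing more than cite that lemma. For an arbitrary small $\cC$ no such lemma can supply this direction, because the implication is false, and the obstacle you ran into is exactly why. If $k : c\to e$ has a retraction $r$, any cosieve containing $k$ contains $rk=\mathrm{id}_c$. So if $\psi$ is not forced at $c$, it is not forced at $k$ either. Then $\forall x\,(\varphi(x)\vee\psi)$ forces $\varphi(x)$ at $k$ for every $x\in F(e)$, whether or not $F(k)$ is surjective.

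Concretely, let $\cC$ be the category of nonempty finite sets and $F$ the inclusion into $\cSet$. Take a stage $A$, an arrow $g : A\to B$, and $\varphi,\psi$ with $\psi$ not forced at $A$ and $\forall x\,(\varphi(x)\vee\psi)$ forced at $A$. Factor $g=[g,\mathrm{id}_B]\circ\iota$, where $\iota : A\to A+B$ is the coprojection; it is split because $A\neq\emptyset$. Then $\varphi$ holds of every element of $A+B$ at $\iota$. Pushing forward along the surjection $[g,\mathrm{id}_B]$ gives $\varphi$ of every element of $B$ at $g$. So $F$ is covert, although $F(1\to 2)$ is not surjective.

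Part of your approach does survive. Pair your candidate $\varphi$ (``$x\in\mathrm{im}\,F(g)$'' at $g : c\to e$) with the cosieve $\psi$ of arrows out of $c$ that have \emph{no retraction}. This $\psi$ is upward closed and omits $\mathrm{id}_c$. Moreover, $\varphi\vee\psi$ holds everywhere as soon as $F$ sends split monos out of $c$ to surjections. This proves ``covert $\Rightarrow$ surjective'' in two situations:
\begin{itemize}
\item whenever every split mono of $\cC$ is an isomorphism, for example when $\cC$ is a poset (the Kripke case);
\item for every decidable $F$, and every $\cC$. If $rk=\mathrm{id}$, then $F(r)$ is surjective, and by your injectivity criterion also injective, so $F(k)=F(r)^{-1}$ is bijective.
\end{itemize}
Hence the second sentence of the statement (CD iff bijective) is true for all $\cC$. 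However, your last step derives it by ``combining'' with the covertness characterisation, which is false in general. You need the argument just described instead. In the counterexample above $F$ is not decidable, consistent with this.
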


\paragraph{Acknowledgments} I thank Sam van Gool and Dominik Kirst for the interesting discussion which led to this note.

\includebibliography

\end{document}